\newcommand{\ZZ}{\mathbb Z}
\DeclareMathOperator*{\Min}{\mathrm{Min}}
\newtheorem{thm}{Theorem}
\newtheorem{lem}[thm]{Lemma}
\newtheorem{prop}[thm]{Proposition}
\newtheorem{rem}[thm]{Remark}
\theoremstyle{definition}
\newtheorem{defn}[thm]{Definition}
\numberwithin{thm}{section}
\title{
    An algorithm for constructing and classifying the space of small integer weighing matrices}
\author{
    Radel Ben-Av, Giora Dula, Assaf Goldberger, Yoseph Strassler
    }
\begin{document}

\begin{abstract}

    In this paper we describe an algorithm for generating all the possible $PIW(m,n,k)$ - integer $m\times n$ Weighing matrices of weight $k$ up to Hadamard equivalence. Our method is efficient on a personal computer for small size matrices, up to $m\le n=12$, and $k\le 50$. As a by product we also improved the \textit{\textbf{nsoks}} \cite{riel2006nsoks} algorithm to find all possible representations of an integer $k$ as a sum of $n$ integer squares.
    We have implemented our algorithm in \texttt{Sagemath} and as an example we provide a complete classification for \ $n=m=7$ and $k=25$.  Our list of $IW(7,25)$ can serve as a step towards finding the open classical weighing matrix $W(35,25)$.

\end{abstract}

\maketitle

\section{Introduction}

Let us define $PIW(m,n,k) = \{ \ P\ |\ P\in \ZZ^{m\times n} \ ,\ PP^\top =kI\}$ , $\ZZ$ is the ring of integers, and let $IW(n,k)=PIW(n,n,k)$. Classical weighing matrices $W(n,k)$ are the subset of $IW(n,k)$ of matrices over $\{-1,0,1\}$. Weighing matrices $W(n,k)$ have been extensively investigated over the past few decades \cite{MR1137832,Dean:FFA:2019,Dean:AAECC:2021,Flammia:Severini:2009,vanDam:2002,Craigen:JCD:1995:Weaving,Craigen:BICA:1991,Hadi:Thomas:Sho:JCTA:2022,Hadi:Sho:BTR:JACO:2022,M_M_Tan:DCC:2018,Craigen:DCC:1995,KHL:BS:JCTA:2018}. A particular interesting subcase are Hadamard matrices $H(n)=W(n,n)$. Hadamard proved that $H(n)$ is noneempty for $n>2$ only if $n$ is divisible by 4. The Hadamard conjecture is that $H(n)$ is noneempty for all $n$ divisible by $4$. A conjecture on weighing matrices extending the Hadamard conjecture is that \ $W(4l,k)$ is nonempty for every $l$ and every $k\le 4l$. For a (somewhat outdated) summary of methods and existence tables, see \cite[Chap. V]{colbourn2006handbook}.\\

A circulant matrix is square $n\times n$ matrix $C$ such that $C_{i,j}=f(i-j)$ where $f:\ZZ/n \to \mathbb C$ and the indices and operations thereof are taken from the abelian group $(\ZZ/n,+)$. Cirulant weighing matrices, denoted by $CW(n,k)$ exist only if $k$ is a perfect square, and under this assumption, there is a variety of solutions. For example, the weight $k=9$ has been fully classified, see \cite{ANG20082802}. Integral circulant weighing matrices, denoted by $ICW(n,k)$, are a stepping stone for the construction of circulant or general classical weighing matrices, see e.g. \cite{gutman2009circulant}. In more detail, we may try to construct a weiging matrix $\displaystyle M=( C_{ij}) \in IW(\displaystyle n_1n_2,k)$, where the blocks $C_{i,j}$ are circulant of size $n_2\times n_2$. If we replace each circulant block $C_{i,j}$ with its row sum $c_{i,j}=sum(C_{i,j})$, the resulting matrix $S=(c_{i,j})$ is in $IW(n_1,k)$. So it might be easier to first construct $S$, and knowing $S$ will give us a clue as how to construct the matrix $M$. Some of the authors have used this method to construct $W(25,16)$ from a certain $IW(5,16)$ (see \cite{munemasa2017weighing} for the matrix). A special instance of this method applies to the doubling \cite{Seberry2005} and Wiliamson \cite{Williamson1944} constructions. It is worthwhile to mention the concept of ``multilevel Hadamard matrices'' \cite{parker2011multilevel}, which has its separate motivation, and is a special case of $IW(n,k)$.\\


Two $PIW(m,n,k)$ matrices are \emph{Hadamard Equivalent} (or H-equivalent) if one can be obtained from the other using a sequence of the following operations:

\begin{enumerate}
    \item Mutiplying a row or a column by -1, and
    \item Swapping two rows or columns.
\end{enumerate}
Note that $PIW(m,n,k)$ is closed under H-equivalence. In the case of $IW(n,k)$ we can add another operation:

\begin{enumerate}
    \setcounter{enumi}{2}
    \item Transpose the matrix.
\end{enumerate}
Adding this operation we obtain an extension of the previous equivalence relation, which we call \emph{transpose-Hadamard} equivalence relation (or TH-equivalence).\\ 

In this paper we give an algorithm to generate all $PIW(m,n,k)$ up to H-equivalence. Our algorithm works by first generating the first row of the matrix, and then extend each time by adding a new row. This is implemented by a tree whose nodes are partial prefixes of the intended solutions. This is similar to a BFS procedure.
 Keeping track of the H-equivalence class of the partial matrix at each stage, helps to significantly reduce the tree width, which makes it tractable for matrix size up to $12$ and weight up to $50$.\\
 
 To this end we define the \emph{row-lex} ordering. This is a full ordering relation on the set of all integer matrices of a given size. This ordering has some key properties which allow us to efficiently keep track of the H-equivalence class at any stage of the algorithm. For efficiency reasons we add an extra parameter $mindepth$ which controls the running time. This comes at the cost of producing more than one matrix in some H-equivalence classes. To overcome this multiplicity, we use the \emph{code-invariant} (for definition see \S\ref{sec:codeinv} below) to help us separate some non H-equivalent matrices from each other. In the past some of the authors have used that invariant to find a symmetric $W(23,16)$.
 Then we use a procedure (not described in this paper) to establish an explicit H-equivalence between matrices not separated by the code invariant. This gives us the full classification to H-equivalence classes. To proceed to TH-equivalence classification, we transpose the matrices in that list, compute again the code-invariants, and prove H-equivalences if needed.\\

An important building block is the \texttt{NSOKS} algorithm to find all representations of an integer $k$ as a sum of $r$ integer squares. An implementation of this already exists \cite{riel2006nsoks} and used in \cite{bright2016mathcheck} and other papers to represent integers as sums of four squares. Our needs are more extensive, and below (\S\ref{sec:NSOKS}) we give an improved version.\\

This entire procedure has been implemented for $IW(7,25)$, and a the full list of all $44$ TH-inequivalent solutions is given.

\section{The \texttt{NSOKS} algorithm}\label{sec:NSOKS}

The \texttt{NSOKS} algorithm computes the collection of all representations of a positive integer $n$ as a sum of $r$ nonnagative squares. The input is the number $n$, an integer $r=$ the number of required squares and an optional argument $maxsq$ which is an upper bound for the integers $s$ in the representation. The output is a list $[S_1,\ldots,S_t]$, where each $S_i$ is a list $[(s_1,m_1),\ldots, (s_l,m_l)]$ with $s_1<s_2<\ldots s_l\le maxsq$ such that $\sum m_i=r$ and $\sum m_is_i^2=n$. Each $S_i$ is a representation of $n$ as a sum of $r$ nonnegative squares, and $S$ is the full list of all possible such representations, up to ordering the squares.\\

A Maple implementation \cite{riel2006nsoks} exists on the web. Nevertheless, our SageMath implementation runs faster. For example, our \texttt{NSOKS}$(200,200)$ has $27482$ representations and runs on our SageMath machine in 0.3s. In comparison, the Maple code adapted to SageMath, on our machine, runs in 13s. Both codes have been checked to give the same answer. Our algorithm advances by recursion, from the largest square down to zero. The algorithm loops on the largest square $s^2$ and its multiplicity $m_s$. Then we descend to $n\to n-m_ss^2$, and call \texttt{NSOKS} by recursion, this time by setting $maxsq=s-1$. The main point of improvement over \cite{riel2006nsoks} is that we work with multiplicities, thus reducing the recursion depth. The second point is that once we get down to $maxsq=1$, we do not need to recurse any more, and the answer is determined immediately (line \ref{step:1}).

\begin{algorithm}[h]
    \caption{Find all representations of $n$ as a sum of $r$ nonnegative squares }\label{alg:nsoks}
    \begin{algorithmic}[1]
    \Procedure{\texttt{NSOKS}}{$n,r,maxsq=False$}
    \If{$maxsq=1$} \textbf{return} $[[(1,n),(0,r-n)]]$ \Comment{{\scriptsize No need for recursion\label{step:1}.}}
    \EndIf
    \State $M\gets \lfloor \sqrt{n} \rfloor$
    \If{$maxsq$}
        \State $M\gets \min(maxsq,M)$
    \EndIf
    \State $L\gets \lceil \sqrt{n/r}\rceil$
    \State $SquaresList\gets []$
    \For{$s\in [L,M]$} \Comment{{\scriptsize Loop on the square.}}
        \For{$i\in [1,\lfloor n/s^2\rfloor]$} \Comment{{\scriptsize Loop on the multiplicity.}}
            \State $n'\gets n-i\cdot s^2$
            \If{$i=r$}
                \State Append $[(s,r),]$ to $SquaresList$.
            \Else 
                \State $rem\gets$ \texttt{NSOKS}$(n',r-i,maxsq=s-1)$ \Comment{{\scriptsize The recursion step.}}
                \For{$SubSquaresList \in rem$}
                    \State Append $[(s,i),*SubSquaresList]$ to $SquareList$
                \EndFor
            \EndIf

        \EndFor
    \EndFor
    \State \textbf{return} SquaresList
    \EndProcedure
    \end{algorithmic}
\end{algorithm}

 \section{The row-lex ordering and the search algorithm}
 In this section we define the row-lex ordering on the set of integer matrices of a given size $m \times n$, and prove some basic properties of this ordering. Using these properties, we design a search algorithm to find an exhaustive list of all $PIW(m,n,k)$ up to Hadamard equivalence. The output of our algorithm may contain more than one candidate in a single Hadamard class, and in the next section we will discuss a post processing procedure towards a correction of this flaw.\\
 
 \subsection{The row-lex ordering} The discussion here is not limited to (partial) weighing matrices, and we consider all integer matrices of a given size $p\times n$. We denote this set by $\ZZ^{m\times n}$. The set $\ZZ$ of all integers carries its natural ordering $\le$. We extend first this ordering to the set of $n$-vectors, $\ZZ^n$ by the lexicographic extension of $\le$, still denoted $\le$. This means that
 \begin{multline*}
    (v_1,\ldots,v_n)<(w_1,\ldots,w_n), \ \text{iff }\exists j  \ (v_1,\ldots,v_{j-1})=(w_1, \ldots, w_{j-1}) \text{ and } v_j<w_j.
\end{multline*}
Next we extend this ordering to $m\times n$-matrices by lexicographic extension of $\le$ on the rows (i.e. viewing the matrix as a vector of rows). We write this ordering as $M\le_R N$. This is called the \emph{row-lex ordering}. Similarly we can consider the \emph{column-lex ordering}, by extending $\le$ on columns, viewing the matrix as a vector of columns. We shall write $M\le_C N$ for this ordering. The two orderings are not equal, and for our algorithm which looks at situations where $m\le n$ it will be more appropriate to use the row-lex ordering.\\

Some notation is in order: For any matrix $M$, let $M_i$ denote its $i$th row and let $M^j$ denote its $j$th column. Let $M_{i:k}$ denote the submatrix whose rows are $M_i,M_{i+1},\ldots,M_{k-1}$ given in this order. We denote $M^{j:l}$ analogously for columns. Let $(-1)_iM$ denote the matrix $M$ with $M_i$ replaced by $-M_i$. More generally we denote $(-1)_SM$, for a set of indices $S$, as the matrix $M$ with $M_i$ replaced by $-M_i$ for all $i\in S$.
Similarly we denote $(-1)^jM$ and $(-1)^SM$ for columns. For each matrix $M$, let $[M]$ denote its Hadamard equivalence class. Let 
$$\Min(M) = \min \{A\ | \ A\in [M]\},$$ the minimum is taken with respect to the row-lex ordering. We say that $M$ is \emph{minimal} if $M=\Min(M)$. In each Hadamard class there exists a unique minimal matrix. We now study some proprties of minimal matrices. We say that a vector $v$ \emph{begins with a positive (resp. negative) entry} if for some $j$, $v_1=\cdots=v_{j-1}=0$ and $v_j>0$ (resp. $v_j<0$).

\begin{lem}
    In a minimal matrix each nonzero row and each nonzero column begins with a negative entry.
\end{lem}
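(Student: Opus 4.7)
The plan is to prove the statement by contradiction in two separate cases, one for rows and one for columns, each time exhibiting a Hadamard-equivalent matrix which is strictly smaller in the row-lex ordering.

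First I would handle the row case. Suppose a minimal matrix $M$ has some nonzero row $M_i$ which begins with a positive entry, so there is $j$ with $M_{i,1}=\cdots=M_{i,j-1}=0$ and $M_{i,j}>0$. Consider $N=(-1)_iM$, which is in $[M]$. Since only row $i$ has changed, $N_k=M_k$ for all $k\ne i$, while $N_i=-M_i$ begins with a negative entry at position $j$. Therefore comparing $M$ and $N$ row by row, all rows before the $i$-th agree, and at row $i$ the two vectors share the zero prefix of length $j-1$ and then $N_{i,j}=-M_{i,j}<M_{i,j}$. Hence $N_i<M_i$ lexicographically and so $N<_RM$, contradicting $M=\Min(M)$.

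Next I would handle the column case, which is slightly more delicate because negating a column changes every row. Suppose $M^j$ is nonzero and begins with a positive entry, so there is $i_0$ with $M_{1,j}=\cdots=M_{i_0-1,j}=0$ and $M_{i_0,j}>0$. Let $N=(-1)^jM$. For $i<i_0$ we have $M_{i,j}=0$, hence $N_{i,j}=0=M_{i,j}$ and $N$ agrees with $M$ on every other column by construction, so $N_i=M_i$. For the $i_0$-th row, $N_{i_0}$ and $M_{i_0}$ agree at all positions except $j$, and at position $j$ we have $N_{i_0,j}=-M_{i_0,j}<0<M_{i_0,j}$. Since they share the prefix at positions $1,\ldots,j-1$, we get $N_{i_0}<M_{i_0}$ lexicographically, so $N<_RM$, again contradicting minimality.

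Together these two cases prove the lemma. The only subtlety, and the point to present carefully, is the column case: one must use the hypothesis $M_{1,j}=\cdots=M_{i_0-1,j}=0$ to guarantee that the negation of column $j$ does not alter the earlier rows, so that the row-lex comparison first disagrees precisely at row $i_0$ and there strictly decreases. The row case is essentially immediate from the definition of row-lex, since modifying a single row only affects the comparison at that row's position.
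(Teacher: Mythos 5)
Your proposal is correct and follows essentially the same argument as the paper: negate the offending row to get a row-lex smaller matrix, and in the column case use the zero prefix of the column to see that rows above the first nonzero entry are unaffected while that row strictly decreases. The only difference is that you spell out the lexicographic comparisons in more detail than the paper does.
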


\begin{proof} Let $M$ be minimal.
    Suppose that a row $M_i$ begins with a positive entry. Then $-M_i<M_i$, and by definition $(-1)_i M<M$, in contradiction to minimality.\\
    Now suppose that $M^j$ begins with a positive entry, sitting at the position $(i,j)$. Then in $(-1)^jM$, the first $i-1$ rows remain unchanged, while $((-1)^jM)_i<M_i$, which in turn implies that $(-1)^jM<M$, again contradicting the minimality of $M$.
\end{proof}

One consequence of the proof, that we shall not use in this paper, is the following statement.

\begin{thm}
    Each matrix $M$ can be brought, using only row and column negations, to a form where each nonzero row and colum begins with a negative entry. 
\end{thm}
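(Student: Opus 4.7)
The plan is to observe that the proof of the previous lemma never invoked row or column permutations: each reduction step was a single row negation or a single column negation. Consequently the same argument applies verbatim if one replaces the full Hadamard equivalence class $[M]$ by the smaller orbit of $M$ under the subgroup $G \le \mathrm{Aut}([M])$ generated by row and column negations.

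More concretely, I would proceed as follows. First, note that $G$ is finite (its order divides $2^{m+n}$), so the $G$-orbit of $M$ is a finite subset of $\ZZ^{m\times n}$ and admits a row-lex minimum, call it $M^*$. Since $M^*$ is obtained from $M$ by a sequence of row and column negations, it suffices to show that $M^*$ has the desired property. Suppose toward contradiction that some nonzero row $M^*_i$ begins with a positive entry; then $-M^*_i <_{\text{lex}} M^*_i$ in $\ZZ^n$, so $(-1)_i M^* <_R M^*$, and since $(-1)_i M^*$ still lies in the $G$-orbit of $M$, this contradicts the choice of $M^*$. Similarly, if some nonzero column $(M^*)^j$ begins with a positive entry at position $i$, then the matrix $(-1)^j M^*$ agrees with $M^*$ on rows $1,\ldots,i-1$ (whose $j$th entries vanish) and satisfies $((-1)^j M^*)_i <_{\text{lex}} M^*_i$ at coordinate $j$, so again $(-1)^j M^* <_R M^*$, contradicting minimality.

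The only thing to double-check is that the second case really does yield a strict row-lex decrease: one must verify that the entries of row $i$ at columns $1,\ldots,j-1$ are the same in $M^*$ and in $(-1)^j M^*$ (they are, since only column $j$ is negated), so the first position of disagreement is exactly $j$, where the sign flips from positive to negative. This is the only place where the shape of the ordering interacts with the operation, and it is immediate.

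So the entire argument reduces to the observation that the previous lemma localises to the $G$-orbit; no new idea is required beyond re-reading that proof with this restriction in mind. The only conceptual obstacle is recognising that one does not need the full equivalence class (which would also allow swaps) — the negation subgroup already suffices, which is what makes the statement meaningful and the proof essentially free.
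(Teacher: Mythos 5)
Your proposal is correct and takes essentially the same route as the paper: the paper likewise observes that row and column negations alone generate an equivalence relation and that the proof of the preceding lemma, which uses only negations, shows the row-lex minimal representative of each such (finite) class has the desired property. You have simply spelled out the details the paper leaves implicit.
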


\begin{proof}
    Sequences of row and column negations define an equivalence relation on matrices. The proof of the above lemma shows that the minimal representative in a class has the desired property. 
\end{proof}

\begin{lem}
    In a minimal matrix $M\in \ZZ^{m\times n}$, the \underline{columns} are in increasing order: $M^1\le M^2\le \cdots \le M^n$.
\end{lem}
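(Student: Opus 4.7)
The plan is to argue by contradiction: assume some adjacent pair of columns is out of order, and exhibit a column swap that produces a strictly smaller matrix in the row-lex order, contradicting minimality. Since adjacent column swaps generate all permutations of columns, it suffices to consider an adjacent violation $M^j > M^{j+1}$.

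First I would set up the notation. Assume toward contradiction that there is some $j$ with $M^j > M^{j+1}$ in lex order, and let $M'$ be obtained from $M$ by swapping columns $j$ and $j+1$. Since a column swap is an H-equivalence operation, $M' \in [M]$, so by minimality we need $M \le_R M'$.

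Next I would locate the decisive row. Because $M^j > M^{j+1}$ lexicographically, there is a smallest index $i$ such that $M_{i,j} \ne M_{i,j+1}$, and for this $i$ we have $M_{i,j} > M_{i,j+1}$, while $M_{i',j} = M_{i',j+1}$ for all $i' < i$. The swap $M \mapsto M'$ only changes entries in columns $j$ and $j+1$, so $M'_{i'} = M_{i'}$ for every $i' < i$ (these rows have equal entries in the two columns). Hence $M$ and $M'$ agree on their first $i-1$ rows, and the comparison $M$ versus $M'$ in the row-lex order reduces to the comparison of $M_i$ versus $M'_i$ in the ordinary lex order on vectors.

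Finally I would finish by comparing these two rows entry by entry. The vectors $M_i$ and $M'_i$ agree in every coordinate except $j$ and $j+1$, where $M'_{i,j} = M_{i,j+1}$ and $M'_{i,j+1} = M_{i,j}$. In particular the first coordinate where they differ is coordinate $j$, and there $M'_{i,j} = M_{i,j+1} < M_{i,j}$, so $M'_i < M_i$ in the lex order, and therefore $M' <_R M$. This contradicts the minimality of $M$, so no adjacent inversion of columns can occur and we conclude $M^1 \le M^2 \le \cdots \le M^n$. The only subtle step is the reduction to an adjacent swap and the careful identification of the decisive row $i$; once that row is pinned down, the lex comparison is immediate.
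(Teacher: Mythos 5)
Your proof is correct and follows essentially the same route as the paper: both reduce to an adjacent column inversion $M^j > M^{j+1}$, identify the first row $i$ where the two columns differ, and observe that swapping the columns leaves rows $1,\dots,i-1$ unchanged while strictly decreasing row $i$ in lex order, contradicting minimality. The only cosmetic difference is that the paper picks the smallest such $j$, which is not actually needed; your justification that an out-of-order chain must contain an adjacent violation is the relevant (and immediate) point.
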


\begin{proof}
    Suppose by contradiction that $M^{j-1}>M^j$. Let $j$ be smallest one with this property. Then for some $i$, $M_{s,j-1}=M_{s,j}$ for all $s<i$ and $M_{i,j-1}>M_{i,j}$. By swapping columns $j,j-1$ we obtain a matrix $M'$ in which rows $1,2,\ldots,i-1$ did not change, while row $i$ has decreased. Thus $M'<_R M$, a contradiction.
\end{proof}

The following is a key property in our algorithm.

\begin{prop} \label{part_min}
    For the row-lex ordering, a matrix $M$ is minimal, if and only if for all $i$, $M_{1:i}$ is minimal.
\end{prop}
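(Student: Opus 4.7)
The reverse implication is immediate, since $M$ is itself one of its prefixes: taking $i$ large enough so that $M_{1:i}=M$ specializes the hypothesis to the minimality of $M$. The content therefore lies in the forward implication, which I would prove by contrapositive: starting from a prefix $M_{1:i}$ that fails to be minimal, I will construct an H-equivalent of $M$ that is strictly smaller than $M$ in the row-lex order.

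Fix $N \in [M_{1:i}]$ with $N <_R M_{1:i}$, and fix a sequence of elementary operations --- row swaps, row negations, column swaps, column negations --- carrying $M_{1:i}$ to $N$. The plan is to lift this sequence to an H-equivalence of the full matrix $M$. The column operations act on all $n$ columns and can be applied to $M$ verbatim. Each row operation involves only indices in $\{1,\ldots,i-1\}$; I would extend it to act as the identity on rows $i,\ldots,m$, which is legitimate because row operations and column operations commute and because signed permutations on a subset of indices extend canonically by the identity on the remaining indices. Let $\Phi$ denote the resulting composite and set $M' = \Phi(M)$. Then by construction $M'_{1:i}=N$, while rows $i,\ldots,m$ of $M'$ are obtained from those of $M$ by the column operations alone.

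It remains to compare $M$ and $M'$. They agree on all rows prior to some first index $j < i$ at which $N$ and $M_{1:i}$ differ; at that row $M'_j < M_j$ in the lex order on vectors. Hence $M' <_R M$, and since $M' \in [M]$, this contradicts the minimality of $M$. The only step requiring genuine care is the lifting construction --- specifically, the observation that the tail rows $M_i,\ldots,M_m$ are only affected by the column part of $\Phi$, so the first-difference row between $M$ and $M'$ lies strictly within the first $i-1$ positions and the strict inequality $N <_R M_{1:i}$ propagates intact to $M' <_R M$.
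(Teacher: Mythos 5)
Your proposal is correct and takes essentially the same route as the paper: the paper likewise disposes of the reverse direction as trivial and proves the forward direction by lifting the Hadamard operations that decrease a non-minimal prefix $M_{1:i}$ (acting on all columns but only the first rows) to the full matrix, then noting that the strict row-lex inequality on the prefix propagates to $M' <_R M$. Your write-up merely makes explicit the lifting of the operation sequence and the location of the first differing row, which the paper leaves implicit.
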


\begin{proof}
    Clearly if $M_{1:i}<_RM'_{1:i}$ then $M<_RM'$. If $M_{1:i}$ is not minimal, then we can perform Hadamard operations on $M$ involving all columns and only the first $i$ rows, to decrease $M_{1:i}$. The resulting matrix $M'<_R M$, in contradiction to the minimality of $M$.
\end{proof}
We remark that in general the initial column submatrices $M^{1:j}$ of a minimal matrix $M$ need not be minimal. Our algorithm will build matrices row by row, and this explains why we prefer to use the row-lex ordering rather than the column-lex counterpart.

\subsection{Minimizing a class}
In this short section we describe the algorithm \texttt{MINCLASS} to find the minimal representative in a Hadamard class.
Suppose that we are given a matrix $M\in \ZZ^{m\times n}$. Let $Mon(m)$ denote the set of all monomial $m\times m$ matrices with values in $\{0,-1,1\}$. Let $Neg(M)$ denote the matrix obtained from $M$ by negating each column that begins with a positive entry. Let $Ord(M)$ be the matrix obtained from $M$ by permuting its columns to be written from left to right in increasing column order. Consider the following algorithm:

\begin{algorithm}
    \caption{Minimizing a Hadamard class}\label{minclass}
    \begin{algorithmic}[1]
    \Procedure{\texttt{MINCLASS}}{$M$}
    \State $m\gets $ height, $n\gets$ width
    \State $Min\gets M$
        \For{$P\in Mon(m)$}\Comment{go over all row negations and permutations}
        \State $N\gets PM$
        \State $N\gets Neg(N)$  
        \State $N\gets Ord(N)$ 
        \If{$N<_R Min$}
            \State $Min\gets N$
        \EndIf
        \EndFor
    \State \textbf{return} $Min$
    \EndProcedure
    \end{algorithmic}
\end{algorithm}

\begin{prop}
The procedure \texttt{\textsc{MINCLASS}}$(M)$ returns the minimal matrix in the class of $M$.  
\end{prop}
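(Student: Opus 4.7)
The plan is to show that $Ord(Neg(PM))$ equals $\min_{Q\in Mon(n)}(PM)Q$ for each fixed $P\in Mon(m)$. Since the Hadamard class of $M$ is exactly $\{PMQ : P\in Mon(m),\,Q\in Mon(n)\}$, the outer loop of \textsc{MINCLASS} will then return $\Min(M)$, as required.

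Fix $P$ and set $N=PM$. Any $Q\in Mon(n)$ specifies, for each column $N^j$ of $N$, a sign $s_j\in\{\pm 1\}$ and a target position, and places $s_jN^j$ into that target position. Since each column of $NQ$ arises as $\pm 1$ times exactly one column of $N$, the sign choices act column-wise on $NQ$ and are mutually independent. By a direct adaptation of the row-negation lemma of Section 3, the optimal sign for each column is the one making it begin with a negative entry (if nonzero): if some column of the current matrix has a positive first nonzero entry in row $i$, then flipping just that column's sign fixes rows $1,\dots,i-1$ and strictly decreases row $i$ in lex order. Hence for any column permutation $\tau$, the row-lex optimal $NQ$ subject to $\tau$ is obtained by applying $Neg$ column-by-column. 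The resulting matrix is $Neg(N)$ with its columns permuted by $\tau$, so the inner minimization reduces to minimizing $Neg(N)\tau$ over permutations $\tau$.

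For the permutation step, let $N'=Neg(N)$ and suppose a permutation $\tau$ minimizes $N'\tau$ in row-lex. If the columns of $N'\tau$ are not in increasing column-lex order, let $j<k$ be two positions where the column at $j$ is column-lex strictly greater than the column at $k$, and let $i$ be the first row in which these two columns differ. By the choice of $i$ the entries at positions $j$ and $k$ are equal in rows $1,\dots,i-1$, so swapping columns $j,k$ leaves these rows untouched. In row $i$, all entries outside positions $j,k$ are unchanged, and the leftmost changed position is $j$, where a strictly larger value is replaced by a strictly smaller one. Therefore the swap strictly lex-decreases the matrix, contradicting minimality. So the minimizer is exactly $Ord(N')$.

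Combining the two steps yields $\min_QNQ = Ord(Neg(N))$, and the outer loop over $P\in Mon(m)$ then computes $\Min(M)$. The most delicate point is verifying in the permutation argument that the first $i-1$ rows really are preserved under the column swap; once this is checked, the remaining calculations follow directly from the definitions of row-lex and column-lex ordering, and the identification $[M]=\{PMQ\}$ is immediate from the definition of H-equivalence.
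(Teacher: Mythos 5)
Your proof is correct, and while it shares the paper's outer skeleton (enumerate $P\in Mon(m)$, then argue that $Ord(Neg(PM))$ handles the column side), the justification of the key step is genuinely different. The paper only verifies the identity for the single ``correct'' $P$ with $PM=M_0Q^{-1}$: it invokes the two previously proven lemmas (in a minimal matrix every nonzero column begins with a negative entry, and the columns are in increasing order) to conclude that $Neg(PM)$ and $M_0$ have the same multiset of columns, hence $Ord(Neg(PM))=M_0$. You instead prove the stronger and more self-contained claim that $Ord(Neg(N))=\min_{Q\in Mon(n)}NQ$ for \emph{every} $N$, by two exchange arguments: column signs optimize independently (the first position in row-major order affected by a column's sign is its first nonzero entry, so the greedy negative choice is forced), and any out-of-order pair of columns can be swapped to strictly decrease the matrix in row-lex. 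Both arguments are sound; your independence claim for the signs and your check that the first $i-1$ rows survive the swap are exactly the points that need care, and you handle them correctly. What your route buys is independence from the structural lemmas about the global minimum $M_0$ (it would survive as a standalone correctness proof of the column-normalization subroutine); what the paper's route buys is brevity, since those lemmas are already on the table.
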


\begin{proof}
    Let $M_0=PMQ$ be the mininal representative in the Hadamard class of $M$, $P,Q$ are monomial. The algorithm
 enumerates over $P\in Mon(m)$ and for the correct $P$ we have $N:=PM=M_0Q^{-1}$. It suffices to show that $M_0=Ord(Neg(N))$. The nonzero columns of $Neg(N)$ and of $M_0$ all begin with a negative entry, so both matrices have the same multiset of columns, which means that $M_0=Neg(N)\Pi$ for a permutation matrix $\Pi$. Since the columns of $M_0$ are in increasing order, then necessarily $Ord(Neg(N))=M_0$.
\end{proof}

\subsection{The main search algorithm}
Now we turn to the main algorithm \texttt{RepPIW} which outputs a list of representatives of all Hadamard classes in $PIW(m,n,k)$. In its default implementation the program outputs exactly one matrix per class, however it contains an optional parameter, `mindepth', which can improve the running time, at the cost of listing one or more matrices per a single class. Before stating the algorithm we give a concise description.\\

The algorithm relies on Proposition \ref{part_min} that initial submatrices of a minimal matrix are minimal. 
The starting point is a list of all minimal integral vectors of weigth $k$, which is in bijection with the output of \texttt{NSOKS}$(n,k)$. This gives the list for $PIW(1,n,k)$. At each stage the algorithm holds a list $MinPIW(p,n,k)$ of all minimal representatives of the $PIW(p,n,k)$. To each member $X_p\in MinPIW(p,n,k)$, 
we produce the list $LV(X_p)$ of all integral vectors of weight $k$ that are (i) larger than the last row of $X_p$, and (ii) are orthogonal to all rows of $X_p$. Then for each $v\in LV(X_p)$ we obtain the matrix $X_{p+1}=[X_p,v]$ by adding a new row below $X_p$. Using \texttt{MINCLASS}, we test if $X_{p+1}$ is minimal. We add it to the new list $MinPIW(p+1,n,k)$ iff it is minimal. Stopping at $p=m$, Proposition \ref{part_min} gurantees that we have correctly created a list representatives for all Hadamard classes of $PIW(m,n,k)$.\\

One improvement that we add, which greatly affects the performence is the parameter `mindepth' which tells the algorithm to stop using \texttt{MINCLASS} if $p>mindepth$. When $p$ is greater we just add any vector $v$ satisfying (i) and (ii). The assumption here is that there are not too many vectors left, and thus the final list is not too large. On the positive side we save a lot of time of minimizing. We will potentially get more representatives than necessary, but as will be discussed below, we have an effective way to tell which are isomorphic to which, eventually yielding the list we want. Following is the pseudo-code. In this algorithm we use the following notation: `SignedPerms$(v)$' is the set of all permutations and element negations of a vector $v$. For a matrix $M$, recall that $M_i$ denotes its $i$th row. Let $M^-$ denote the matrix without its last row. Let $[M,v]$ denote the matrix $M$, augmented by the additional row $v$.\\

\begin{algorithm}
    \caption{Generating Hadamard representatives of $PIW(m,n,k)$}\label{RepPIW}
    \begin{algorithmic}[1]
    \Procedure{\texttt{RepPIW}}{$m,n,k,mindepth=m$}
    \State $SOKS\gets$ \texttt{NSOKS}$(n,k)$
    \State $MinPIW[1]\gets $ [\texttt{MINCLASS}$(v)$ for $v$ in $SOKS$] 
    \State $AllRows\gets$ SignedPerms$(SOKS)$ \Comment{{\scriptsize This is the full resrvoir of all possible rows.}} 
    \If{m=1}
        \State \textbf{return} $MinPIW[1]$
    \EndIf
    \For{$v \in MinPIW[1]$} \Comment{{\scriptsize Compute a list of all second rows.}}
            \State $R(v)\gets $[$w\in AllRows$ if $wv^\top=0$ \& $w>v$]
    \EndFor
    \For{$p=1$ to $m-1$}
        \For{$X\in MinPIW[p]$}
            \State $R(X)\gets $ [$w\in R(X^-)$ if $X_p w^\top=0$ \& $w>X_p$] \ \ \\ \Comment{Make a list of all $p$th rows. This tests conditions (i) and (ii)}
            \For{$w\in R(X)$}
                \State $X_{new}=[X,w]$
                \If{$p\le mindepth$}
                    \If{$X_{new}==$ \texttt{MINCLASS}$(X_{new})$}
                        \State Append $X_{new}$ to $MinPIW[p+1]$.
                    \EndIf
                \Else
                    \State Append $X_{new}$ to $MinPIW[p+1]$.
                \EndIf
            \EndFor
        \EndFor
    \EndFor
    \State \textbf{return} $MinPIW[m]$
    \EndProcedure
    \end{algorithmic}
\end{algorithm}

\begin{thm}
    The function \textsc{\texttt{RepPIW}}$(m,n,k)$ outputs the list $MPIW(m,n,k)$ of all minimal of Hadamard representatives of $PIW(m,n,k)$.\\ The function 
    \textsc{\texttt{RepPIW}}$(m,n,k,mindepth=d)$ outputs a larger list of $PIW(m,n,k)$ containing all minimal elements.
\end{thm}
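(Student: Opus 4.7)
The plan is to prove by induction on $p$ the statement: at the end of the $(p-1)$st iteration of the outer loop, $MinPIW[p]$ contains every minimal Hadamard representative of $PIW(p,n,k)$, and moreover it contains \emph{only} minimal representatives whenever $p-1\le mindepth$. Applied at $p=m$, this gives exact equality in the default setting ($mindepth=m$) and the weaker superset statement when $mindepth=d<m$. The base case $p=1$ uses the fact that the Hadamard action on a single row reduces to signed permutation of entries, so H-classes of $PIW(1,n,k)$ are in bijection with multisets $\{|v_1|,\ldots,|v_n|\}$ with $\sum v_i^2=k$; by the specification of \texttt{NSOKS} these are exactly its outputs, and \texttt{MINCLASS} then selects the minimum of each class.

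For the inductive step I would first record an auxiliary observation: in a minimal matrix, the rows are \emph{strictly} increasing. The weak inequality $M_i\le M_{i+1}$ follows from the swap argument (if $M_i>M_{i+1}$, exchanging these two rows produces a row-lex-smaller matrix, contradicting minimality), and strictness follows from orthogonality, since $M_i=M_{i+1}$ would force $M_iM_{i+1}^\top=k\ne 0$. Now let $Y$ be minimal in $PIW(p+1,n,k)$. By Proposition \ref{part_min}, $Y_{1:p}$ is minimal, so by the inductive hypothesis $Y_{1:p}\in MinPIW[p]$. The row $Y_{p+1}$ is a weight-$k$ integer vector, hence a signed permutation of some NSOKS element and therefore in $AllRows$; it is orthogonal to $Y_1,\ldots,Y_p$ by the $PIW$ condition; and the auxiliary observation gives $Y_{p+1}>Y_p$. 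Hence $Y_{p+1}\in R(Y_{1:p})$, and the algorithm considers $Y=[Y_{1:p},Y_{p+1}]$ as a candidate. If $p\le mindepth$, the test $X_{new}=\texttt{MINCLASS}(X_{new})$ succeeds because $Y$ is already minimal; if $p>mindepth$, the test is skipped and $Y$ is added unconditionally. Conversely, in the regime $p\le mindepth$, every element appended to $MinPIW[p+1]$ passes the minimality check, so the list contains only minimal matrices, establishing the exact equality in the default case.

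The main technical subtlety is the incremental definition $R(X) \subseteq R(X^-)$, which the code relies on for efficiency. I would justify it by monotonicity of orthogonality (a $w$ orthogonal to all rows of $X$ is \emph{a fortiori} orthogonal to all rows of $X^-$) together with the fact that every $X\in MinPIW[p]$ has strictly increasing rows, so that $w>X_p$ forces $w>X_{p-1}$. This row-ordering invariant is itself maintained throughout the algorithm by a secondary induction, since at every extension step the new row is required to satisfy $w>X_p$; this is the reason the proof of the ``$\supseteq$'' direction goes through even in the $p>mindepth$ regime, where $MinPIW[p]$ may contain non-minimal matrices but nonetheless consists of elements of $PIW(p,n,k)$ with strictly increasing rows. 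No step is deep; the only bookkeeping worth care is matching the algorithm's strict-ordering condition $w>v$ with the strict-ordering property of minimal matrices, which relies crucially on $k>0$.
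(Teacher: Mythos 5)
Your proof is correct and follows the same inductive strategy as the paper's (induction on the number of rows, using Proposition \ref{part_min} to reduce to extending a minimal prefix). It is in fact more complete than the paper's argument: the paper silently treats $R(M_{1:m-1})$ as the set of all weight-$k$ vectors orthogonal to the previous rows, whereas you explicitly verify the filter $w>X_p$ and the incremental restriction $R(X)\subseteq R(X^-)$ by proving the auxiliary lemma that the rows of a minimal $PIW$ matrix are strictly increasing (weak monotonicity from the row-swap argument, strictness from orthogonality and $k>0$) --- a point the paper's proof glosses over entirely.
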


\begin{proof}
    The proof of the first part is by induction on $m$. For $m=1$ this is clear, as $MPIW(1,n,k)$ is the list of all minimal vector is $SOKS$. Assuming validity for $m-1$, we enter the for loop at $p=m-1$ (line 11) with $MinPIW(m-1)=MPIW(m-1,n,k)$ by the induction hypothesis. Suppose that $M\in MPIW(m,n,k)$. Then by Theorem \ref{part_min} $M_{1:m-1}\in MinPIW(m-1)$. The list $R(M_{1:m-1})$ holds all vectors that are orthogonal to $M_{1:m-1}$. Thus the vector $M_m$ enters the list $R(M_{1:m-1})$ (line 13) and passes the minimality test (line 17), allowing $M$ to enter the list $MinPIW(m)$ (line 18). This proves that $MinPIW(m)\supseteq MPIW(m,n,k)$. The opposite inclusion is clear as line $18$ allows only minimal matrices. This proves the first assertion. The second assertion follows easily, as we do not always perform the minimality test, but yet the minimal matrices pass all tests. 
\end{proof}

\subsection{Imporving \texttt{MINCLASS}}
The procedure \texttt{MINCLASS}$(M)$ becomes impracticle as the number of rows $m$ becomes large, since we have a factor of $2^mm!$ which is the size of $Mon(M)$. We suggest an improvement which can greatly reduce complexity, however, so far we have not implemented this, and we are not able to estimate the worst case complexity. It looks like the `average' case complexity is low (again we find it difficult to define what is `average').\\

The idea is simple. We first minimize the indivdual rows of $M\in \ZZ^{m\times n}$. Only the smallest row(s) can be selcted as the first row of $Min(M)$. Having chosen the first row, we now adjoin all remaining vectors as candidates to the second row of $Min(M)$. Then we minimize the resulting $2\times n$ matrices. Again we only choose the smallest one(s). We proceed similarly with the third row(s) and so on. Crucially, note that for the minimization we do not need to go over $Mon(p)$. We only need to add the new row and its negation, and then minimize by columns. This minimization will not alter the first $p-1$ rows (as they form a minimal matrix).
Also note that our choice of the first $p-1$ rows is the smallest possible, which is necessary for them to be the matrix $Min(M)_{1:p-1}$. Below is the pseudocode.\\

\begin{algorithm}
    \caption{Fast Minimizing a Hadamard class}\label{fastminclass}
    \begin{algorithmic}[1]
    \Procedure{\texttt{FastMINCLASS}}{$M,Init=(\ ),RIndList=\{[1,2\ldots,m],\}$}
    \State $m\gets $ height, $n\gets$ width.
    \If{height(Init)==m} \Comment{{\scriptsize This is if $Init$ is the full matrix.}}
        \State \textbf{return} Init 
    \EndIf
    \For{$Inds\in RIndList$} \Comment{{\scriptsize This is a list of unused row numbers.}}
        \State $Ns\gets [\ ]$
        \For{$i\in Inds$}
            \State $v\gets M_i$
            \State $N_1\gets [Init,v]$
            \State $N_2\gets [Init,-v]$ \Comment{{\scriptsize We test if $v$ or $-v$ is to be added.}}
            \State $N[i]\gets \min(Ord(Neg(N_1)),Ord(Neg(N_2)))$.
            \State Append $N[i]$ to $Ns$
        \EndFor
        \State $NewInit[Inds]\gets \min(Ns)$.
        \State $NewRIndList[Inds]\gets [Inds \setminus \{i\} \text{for all $i$ if} N[i] == \min(Ns)]$\\ \Comment{{\scriptsize Remove index $i$ if this gave a minimum.}}
    \EndFor
    \State $MinN\gets \min(NewInit[Inds] \text{ for all } Inds)$ \Comment{Pick the absolute minimum.}
    \State $MIndList\gets \{NewRIndList[Ind] \text{ for all } Inds \text{ if } NewInit[Inds]=MinN\}$
    \State $MinM \gets$\texttt{FastMINCLASS}$(M,MinN,MIndList)$ \Comment{{\scriptsize Recursion on new initials.}}
    \State \textbf{return} $MinM$ 
    \EndProcedure
    \end{algorithmic}
\end{algorithm}

In this algorithm the input is a matrix $M$, an initial matrix $Init$ which is supposed to be $Min(M)_{1:p}$ and a set $RIndList$ of lists of indices, where each list containes the row indices not used in $Init$ (there might be few options due to branching). The algorithm constructs the minimal matrix in the class of $M$ subject to the constraint that its $1:p$ part equals $Init$.\\ 

\begin{rem}
    If there is no branching, i.e. there is just one candidate added to an initial at each time, the algorithm finishes quickly. Otherwise we will suffer from branching. There are cases with vast branching, such as scalar matrices, but we feel that on `average' there will be only small branching. We find it hard to estimate the average effect.
\end{rem}

\begin{rem}
    Some of the branching is caused by matrix automorphisms (i.e. Hadamard self equivalences). If this were the only cause, we could just settle for a greedy algorithm, picking up the first candidate row each time, thus avoid branching. We know however, that in classical Hadamard matrices every 3-tuple of rows minimizes to the same matrix giving way to massive branching, regradless of automorphisms.
\end{rem}
    
\section{Results for $IW(7,25)$}
In this section we report on the performence of our algorithm to classify $IW(7,25)$ up to Hadamard equivalence. We ran the main algorithm \texttt{RepPIW}$(7,7,25,mindepth=4)$. The implementation was programmed on SageMath \cite{sagemath} on a Dell laptop with Core i7 and 8GB ram. The running time was $2$ miuntes. The output was a list of $420$ matrices in $IW(7,25)$.\\

Next we have computed the \emph{code invariant} on each matrix, which we now define.

\subsection{The code invariant}\label{sec:codeinv}
For an integral matrix $D\in [-L,L]^{d\times n}$, we compte the value $Code(M)$ which is the vector $\mathbf{b}D$, where $\mathbf{b}=[b^{d-1},\ldots,b^2,b,1]$, $b=2L+1$. The correspondence $D\to Code(D)$ is an injection. We write $D\prec_{d}M$ if $D$ is a $d\times n$ submatrix of $M$. For any matrix $M\in [-L,L]^{r\times n}$, we define the \emph{code invariant} to be 
$$ CodeInv(M,d)\ := \ \text{Multiset}\{Code(Min(D))\ | \ D\prec_d M   \}.
$$
This is clearly a Hadamard invariant of $M$.\\

We have computed $CodeInv(M,3)$ for all $M$ in our list of $420$ matrices, and discovered that this invariant breaks our list into $49$ sublists, $L_1,\ldots,L_{49}$. The members of each list $L_j$ have the same code invariant, and the memebers of different lists have different code invariants.\\

At this point we have verified that in each list $L_j$, all elements are Hadmard equivalent, by producing the monomial transformations. 
Finally, we have reduced our list to $44$ elements, each pair of them is not Hadamard equivalent, nor equivalent to the transpose. To summarize,

\begin{thm}
    Up to Hadamard equivalence, there are $49$ nonequivalent matrices in $IW(7,25)$. Allowing transposition, these reduce to only $44$ of classes.
\end{thm}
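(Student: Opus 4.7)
The plan is to turn the outline preceding the theorem into a proof in three stages: generation, separation, and merging. Throughout, we assume the correctness of Theorem~3.2 (so \texttt{RepPIW} with \emph{mindepth}~$=4$ produces a list containing every minimal representative of every Hadamard class of $IW(7,25)$, possibly with repetitions) and the observation that $CodeInv(\,\cdot\,,d)$ is an H-invariant.

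First, I would run \texttt{RepPIW}$(7,7,25,\textit{mindepth}=4)$ and record the resulting $420$ matrices $M_1,\dots,M_{420}\in IW(7,25)$. By Theorem~3.2, every H-equivalence class of $IW(7,25)$ is represented at least once in this list; however, because \emph{mindepth} was set below $m=7$, a single class may appear several times. The goal of the proof is thus to identify exactly which $M_i$ are H-equivalent (respectively TH-equivalent) to which.

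Second, for the lower bound on the number of classes, I would compute $CodeInv(M_i,3)$ for each $i$. Since $CodeInv$ is manifestly invariant under row/column permutations and sign changes, any two matrices with distinct code invariants lie in distinct H-classes. The computation (as reported) splits the $420$ matrices into $49$ groups $L_1,\dots,L_{49}$ with pairwise distinct code invariants, so there are \emph{at least} $49$ H-equivalence classes. For the matching upper bound, I would establish that within each $L_j$ all matrices are in fact H-equivalent by exhibiting explicit monomial transformations: for each ordered pair $M,M'\in L_j$, search for permutation matrices $P,Q$ and sign matrices $S,T$ with $SPMQT=M'$. In practice this is done by running \texttt{MINCLASS} (or the improved \texttt{FastMINCLASS}) on every member of $L_j$ and recording the canonicalizing transformation; matrices sharing the same image under \texttt{MINCLASS} are H-equivalent, and the equivalence is certified by composing the recorded transformations. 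Combining the two bounds yields exactly $49$ H-classes, proving the first assertion.

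Third, for the TH-count, I would fix one representative $R_1,\dots,R_{49}$ from each H-class and form the enlarged list $R_1,\dots,R_{49},R_1^\top,\dots,R_{49}^\top$. Computing $CodeInv$ on this enlarged list partitions it, and within each code-invariant block I would again verify H-equivalences by explicit monomial transformations as above. The reported outcome is that exactly $5$ pairs $(R_i,R_j^\top)$ (with $i\neq j$) turn out to be H-equivalent while the remaining transposes produce no new identifications, collapsing the $49$ H-classes to $49-5=44$ TH-classes. This proves the second assertion.

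The main obstacle is computational rather than conceptual: the separation step is automatic once the code invariants are computed, but the consolidation within each $L_j$, and especially the cross-class matching of the transposes, requires searching the group $\mathrm{Mon}(7)\times\mathrm{Mon}(7)$ of size $(2^7\cdot 7!)^2\approx 4\cdot 10^{11}$. The practical way around this is the two-sided \texttt{MINCLASS} strategy described above, which replaces brute-force enumeration by a canonical form; verifying that this canonical form really certifies equivalence (and that the searches have not missed a match) is what makes the proof rigorous.
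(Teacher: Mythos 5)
Your proposal matches the paper's own argument: generate the $420$ candidates with \texttt{RepPIW}$(7,7,25,mindepth=4)$, use $CodeInv(\cdot,3)$ to separate the list into $49$ provably distinct H-classes, certify H-equivalence within each class by explicit monomial transformations, and then repeat the invariant-plus-certification step on the transposes to collapse $49$ to $44$ TH-classes. The only cosmetic difference is that you spell out the canonical-form (\texttt{MINCLASS}) mechanism for producing the certifying transformations, which the paper leaves as an unspecified auxiliary procedure.
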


\subsection{All $IW(7,25)$ up to TH equivalence.}

\begin{defn}
    A matrix $M\in IW(n,k)$ is \emph{imprimitive} if it is H-equivalent to a block  sum of smaller $IW(n_i,k)$. Otherwise we say that $M$ is \emph{primitive}. 
\end{defn}

As it turns out, exactly $19$ matrices of our list of $44$ are primitive. The rest are H equivalent to block sums of $IW(1,25),IW(2,25),IW(4,25),IW(5,25)$ and $IW(6,25)$. We shall use the notation $n_1A_1\oplus n_2A_2\cdots \oplus n_rA_r$ to denote a block sum of $n_1$ copies of $A_1$, $n_2$ copies of $A_2$ and so on. We first list the primitive $IW(r,25)$ for $r\le 7$.\\

\begin{itemize}
    \item[{\large \bf --$IW(1,25)$:}] $$A_1=[5].$$
    \item[{\large \bf --$IW(2,25)$:}] $$B_1=\begin{bmatrix}
        3 & 4 \\ 4 & -3
    \end{bmatrix}.$$
    \item[{\large\bf --$IW(3,25)$:}] $$\emptyset.$$
    \item[{\large \bf --$IW(4,25)$:}] $$C_1=\left[\begin{array}{rr|rr}
        1 & 4 & 2 & -2 \\
        4 & 1 & -2 & 2 \\
        \hline
        2 & -2 & 4 & 1 \\
        -2 & 2 & 1 & 4
        \end{array}\right], C_2=\left[\begin{array}{rr|rr}
            1 & 4 & 2 & -2 \\
            4 & -1 & -2 & -2 \\
            \hline
            2 & -2 & 4 & 1 \\
            2 & 2 & -1 & 4
            \end{array}\right]$$
    \item[{\large \bf --$IW(5,25)$:}]
            $$D_1=\left[\begin{array}{r|rrrr}
                3 & -2 & -2 & -2 & -2 \\
                \hline
                2 & -2 & 0 & 1 & 4 \\
                2 & 4 & -2 & 0 & 1 \\
                2 & 1 & 4 & -2 & 0 \\
                2 & 0 & 1 & 4 & -2
                \end{array}\right], D_2=\left[\begin{array}{r|rrrr}
                    3 & 2 & 2 & 2 & 2 \\
                    \hline
                    2 & 3 & -2 & -2 & -2 \\
                    2 & -2 & 3 & -2 & -2 \\
                    2 & -2 & -2 & 3 & -2 \\
                    2 & -2 & -2 & -2 & 3
                    \end{array}\right]$$

    \item[{\large \bf --$IW(6,25)$:}] 
    $E_i=$\\[0.3cm]
    \scalebox{0.5}[0.5]{$%
    \left[\begin{array}{rrr|rrr}
        -4 & 1 & 0 & 2 & 0 & 2 \\
        0 & -4 & 1 & 2 & 2 & 0 \\
        1 & 0 & -4 & 0 & 2 & 2 \\
        \hline
         -2 & -2 & 0 & -4 & 0 & 1 \\
        0 & -2 & -2 & 1 & -4 & 0 \\
        -2 & 0 & -2 & 0 & 1 & -4
        \end{array}\right], \left[\begin{array}{rrrrrr}
            4 & 2 & 2 & 1 & 0 & 0 \\
            2 & 1 & -4 & -2 & 0 & 0 \\
            2 & -4 & 0 & 0 & 2 & 1 \\
            0 & 0 & 2 & -4 & 1 & -2 \\
            0 & 0 & 1 & -2 & -2 & 4 \\
            -1 & 2 & 0 & 0 & 4 & 2
            \end{array}\right],
         \left[\begin{array}{rrrrrr}
                4 & 2 & 2 & 1 & 0 & 0 \\
                2 & 0 & -3 & -2 & 2 & 2 \\
                2 & -2 & -2 & 0 & -2 & -3 \\
                1 & -4 & 2 & 0 & 0 & 2 \\
                0 & 0 & 2 & -4 & 1 & -2 \\
                0 & -1 & 0 & 2 & 4 & -2
                \end{array}\right] ,
                \left[\begin{array}{rrrrrr}
                    4 & 2 & 2 & 1 & 0 & 0 \\
                    2 & 0 & -3 & -2 & 2 & 2 \\
                    2 & -3 & 0 & -2 & -2 & -2 \\
                    1 & -2 & -2 & 4 & 0 & 0 \\
                    0 & 2 & -2 & 0 & 1 & -4 \\
                    0 & 2 & -2 & 0 & -4 & 1
                    \end{array}\right] , 
                   $}\\[0.3cm]

                \scalebox{0.5}[0.5]{$ 
                 \left[\begin{array}{rrrrrr}
                4 & 2 & 2 & 1 & 0 & 0 \\
                2 & 0 & -4 & 0 & 2 & 1 \\
                2 & -4 & 0 & 0 & -1 & -2 \\
                1 & 0 & 0 & -4 & -2 & 2 \\
                0 & 2 & -1 & -2 & 0 & -4 \\
                0 & 1 & -2 & 2 & -4 & 0
                \end{array}\right] ,  
                \left[\begin{array}{rrrrrr}
                3 & 3 & 2 & 1 & 1 & 1 \\
                3 & -1 & -1 & 1 & -2 & -3 \\
                2 & -1 & -3 & -1 & 3 & 1 \\
                1 & 1 & -1 & -3 & -3 & 2 \\
                1 & -2 & 3 & -3 & 1 & -1 \\
                1 & -3 & 1 & 2 & -1 & 3
                \end{array}\right] , 
                 \left[\begin{array}{rrrrrr}
                3 & 3 & 2 & 1 & 1 & 1 \\
                3 & -1 & -1 & 1 & -2 & -3 \\
                2 & -3 & 1 & -1 & -1 & 3 \\
                1 & 2 & -3 & -3 & -1 & 1 \\
                1 & -1 & 1 & -3 & 3 & -2 \\
                1 & -1 & -3 & 2 & 3 & 1
                \end{array}\right] , 
                 \left[\begin{array}{rrrrrr}
                3 & 3 & 2 & 1 & 1 & 1 \\
                3 & -2 & 1 & -1 & -1 & -3 \\
                2 & -3 & -1 & 1 & 1 & 3 \\
                1 & 1 & -1 & -3 & -3 & 2 \\
                1 & 1 & -3 & 3 & -2 & -1 \\
                -1 & -1 & 3 & 2 & -3 & 1
                \end{array}\right] , $}\\[0.5cm]
                \scalebox{0.5}[0.5]{$
                \left[\begin{array}{rrrrrr}
                3 & 3 & 2 & 1 & 1 & 1 \\
                3 & -2 & -3 & 1 & 1 & 1 \\
                1 & 1 & -1 & 2 & -3 & -3 \\
                2 & -3 & 3 & -1 & -1 & -1 \\
                1 & 1 & -1 & -3 & 2 & -3 \\
                -1 & -1 & 1 & 3 & 3 & -2
                \end{array}\right] , 
                \left[\begin{array}{rrrrrr}
                3 & 3 & 2 & 1 & 1 & 1 \\
                3 & -3 & 1 & 1 & -1 & -2 \\
                2 & 1 & -3 & -1 & -3 & 1 \\
                1 & 1 & -1 & -3 & 2 & -3 \\
                1 & -1 & -3 & 2 & 3 & 1 \\
                -1 & 2 & -1 & 3 & -1 & -3
                \end{array}\right] , 
                \left[\begin{array}{rrrrrr}
                3 & 3 & 2 & 1 & 1 & 1 \\
                3 & -3 & 1 & 1 & -1 & -2 \\
                2 & -1 & -1 & -3 & -1 & 3 \\
                1 & 2 & -3 & 1 & -3 & -1 \\
                1 & 1 & -1 & -3 & 2 & -3 \\
                1 & -1 & -3 & 2 & 3 & 1
                \end{array}\right] , 
                \left[\begin{array}{rrrrrr}
                3 & 2 & 2 & 2 & 2 & 0 \\
                2 & 2 & 0 & -2 & -3 & 2 \\
                2 & 0 & -2 & -3 & 2 & -2 \\
                2 & -2 & -3 & 2 & 0 & 2 \\
                2 & -3 & 2 & 0 & -2 & -2 \\
                0 & -2 & 2 & -2 & 2 & 3
                \end{array}\right],$}\\[0.3cm]
                \scalebox{0.5}[0.5]{$
                \left[\begin{array}{rrrrrr}
                    4 & 2 & 2 & 1 & 0 & 0 \\
                    2 & -2 & -2 & 0 & 3 & 2 \\
                    2 & -2 & -2 & 0 & -2 & -3 \\
                    1 & 0 & 0 & -4 & -2 & 2 \\
                    0 & 3 & -2 & -2 & 2 & -2 \\
                    0 & 2 & -3 & 2 & -2 & 2
                    \end{array}\right]
    $}\\[0.3cm]

\item[{\large \bf --$IW(7,25)$:}] $F_i=$\\[0.1cm]
\scalebox{0.45}[0.45]{$
        \left[\begin{array}{rrrrrrr}
        4 & 2 & 2 & 1 & 0 & 0 & 0 \\
        2 & 0 & -3 & -2 & 2 & 2 & 0 \\
        2 & -4 & 0 & 0 & 0 & -2 & 1 \\
        1 & 0 & -2 & 0 & -4 & 0 & -2 \\
        0 & 2 & -2 & 0 & 1 & -4 & 0 \\
        0 & 1 & 0 & -2 & -2 & 0 & 4 \\
        0 & 0 & 2 & -4 & 0 & -1 & -2
        \end{array}\right] , 
        \left[\begin{array}{rrrrrrr}
        4 & 2 & 2 & 1 & 0 & 0 & 0 \\
        2 & -1 & -1 & -4 & 1 & 1 & 1 \\
        1 & 1 & -3 & 0 & 1 & -2 & -3 \\
        1 & 0 & -3 & 2 & -1 & 3 & 1 \\
        1 & -1 & -1 & 0 & -3 & -3 & 2 \\
        1 & -3 & 1 & 0 & -2 & 1 & -3 \\
        1 & -3 & 0 & 2 & 3 & -1 & 1
        \end{array}\right] , 
        \left[\begin{array}{r|rrr|rrr}
            4 & 2 & 2 & 1 & 0 & 0 & 0 \\
            \hline
             1 & -4 & 1 & 2 & 1 & 1 & 1 \\
            2 & -1 & -1 & -4 & 1 & 1 & 1 \\
            1 & 1 & -4 & 2 & 1 & 1 & 1 \\
            \hline
             1 & -1 & -1 & 0 & -3 & -3 & 2 \\
            1 & -1 & -1 & 0 & -3 & 2 & -3 \\
            1 & -1 & -1 & 0 & 2 & -3 & -3
            \end{array}\right] , 
        \left[\begin{array}{rrrrrrr}
        4 & 2 & 2 & 1 & 0 & 0 & 0 \\
        2 & -1 & -2 & -2 & 2 & 2 & 2 \\
        2 & -2 & -1 & -2 & -2 & -2 & -2 \\
        1 & -2 & -2 & 4 & 0 & 0 & 0 \\
        0 & 2 & -2 & 0 & 3 & -2 & -2 \\
        0 & 2 & -2 & 0 & -2 & 3 & -2 \\
        0 & 2 & -2 & 0 & -2 & -2 & 3
        \end{array}\right] , $}\\[0.3cm]
        \scalebox{0.45}[0.45]{$
        \left[\begin{array}{rrrrrrr}
        4 & 2 & 2 & 1 & 0 & 0 & 0 \\
        2 & -1 & -2 & -2 & 2 & 2 & 2 \\
        2 & -2 & -2 & 0 & 0 & -2 & -3 \\
        0 & 2 & 0 & -4 & -1 & -2 & 0 \\
        1 & -2 & 0 & 0 & -4 & 0 & 2 \\
        0 & 2 & -2 & 0 & -2 & 3 & -2 \\
        0 & 2 & -3 & 2 & 0 & -2 & 2
        \end{array}\right] , 
        \left[\begin{array}{rrrrrrr}
        4 & 2 & 2 & 1 & 0 & 0 & 0 \\
        2 & -1 & -3 & 0 & 3 & 1 & 1 \\
        1 & 1 & -1 & -4 & -1 & 1 & -2 \\
        1 & 0 & -3 & 2 & -3 & -1 & -1 \\
        1 & -1 & 0 & -2 & -1 & -3 & 3 \\
        1 & -3 & 1 & 0 & 1 & -2 & -3 \\
        1 & -3 & 1 & 0 & -2 & 3 & 1
        \end{array}\right] , 
        \left[\begin{array}{rrrrrrr}
        4 & 2 & 2 & 1 & 0 & 0 & 0 \\
        2 & -1 & -3 & 0 & 3 & 1 & 1 \\
        1 & 1 & -1 & -4 & -2 & 1 & 1 \\
        1 & 0 & -3 & 2 & -3 & -1 & -1 \\
        1 & -1 & 0 & -2 & 1 & -3 & -3 \\
        1 & -3 & 1 & 0 & -1 & 3 & -2 \\
        1 & -3 & 1 & 0 & -1 & -2 & 3
        \end{array}\right] , 
        \left[\begin{array}{rrrrrrr}
        4 & 2 & 2 & 1 & 0 & 0 & 0 \\
        2 & -1 & -3 & 0 & 3 & 1 & 1 \\
        1 & 1 & -3 & 0 & -3 & 1 & -2 \\
        1 & -1 & 1 & -4 & -1 & 2 & 1 \\
        1 & -1 & 0 & -2 & 1 & -3 & -3 \\
        1 & -1 & -1 & 0 & -2 & -3 & 3 \\
        1 & -4 & 1 & 2 & -1 & 1 & -1
        \end{array}\right] , $}\\[0.3cm]
        \scalebox{0.45}[0.45]{$
        \left[\begin{array}{rrrrrrr}
        4 & 2 & 1 & 1 & 1 & 1 & 1 \\
        2 & 1 & -2 & -2 & -2 & -2 & -2 \\
        1 & -2 & 4 & -1 & -1 & -1 & -1 \\
        1 & -2 & -1 & 4 & -1 & -1 & -1 \\
        1 & -2 & -1 & -1 & 4 & -1 & -1 \\
        1 & -2 & -1 & -1 & -1 & 4 & -1 \\
        1 & -2 & -1 & -1 & -1 & -1 & 4
        \end{array}\right] , 
        \left[\begin{array}{rrrrrrr}
        4 & 2 & 1 & 1 & 1 & 1 & 1 \\
        2 & 1 & -2 & -2 & -2 & -2 & -2 \\
        1 & -2 & 4 & -1 & -1 & -1 & -1 \\
        1 & -2 & -1 & 3 & 1 & 0 & -3 \\
        1 & -2 & -1 & 1 & 0 & -3 & 3 \\
        1 & -2 & -1 & 0 & -3 & 3 & 1 \\
        1 & -2 & -1 & -3 & 3 & 1 & 0
        \end{array}\right] , 
        \left[\begin{array}{rrrrrrr}
        4 & 2 & 1 & 1 & 1 & 1 & 1 \\
        2 & -1 & 2 & -2 & -2 & -2 & -2 \\
        1 & 2 & -4 & -1 & -1 & -1 & -1 \\
        1 & -2 & -1 & 3 & 1 & 0 & -3 \\
        1 & -2 & -1 & 1 & 0 & -3 & 3 \\
        1 & -2 & -1 & 0 & -3 & 3 & 1 \\
        1 & -2 & -1 & -3 & 3 & 1 & 0
        \end{array}\right] , 
        \left[\begin{array}{rrrrrrr}
        4 & 2 & 1 & 1 & 1 & 1 & 1 \\
        2 & -1 & 2 & -2 & -2 & -2 & -2 \\
        1 & 2 & -4 & -1 & -1 & -1 & -1 \\
        1 & -2 & -1 & 4 & -1 & -1 & -1 \\
        1 & -2 & -1 & -1 & 4 & -1 & -1 \\
        1 & -2 & -1 & -1 & -1 & 4 & -1 \\
        1 & -2 & -1 & -1 & -1 & -1 & 4
        \end{array}\right] , $}\\[0.3cm]
        \scalebox{0.45}[0.45]{$
        \left[\begin{array}{rrrrrrr}
        4 & 2 & 1 & 1 & 1 & 1 & 1 \\
        2 & -3 & 2 & 0 & 0 & -2 & -2 \\
        1 & 2 & 0 & -3 & -3 & -1 & -1 \\
        1 & 0 & -3 & 3 & -2 & -1 & -1 \\
        1 & 0 & -3 & -2 & 3 & -1 & -1 \\
        1 & -2 & -1 & -1 & -1 & 4 & -1 \\
        1 & -2 & -1 & -1 & -1 & -1 & 4
        \end{array}\right] , 
        \left[\begin{array}{rrrrrrr}
        4 & 2 & 1 & 1 & 1 & 1 & 1 \\
        2 & -3 & 2 & 0 & 0 & -2 & -2 \\
        1 & 2 & 0 & -3 & -3 & -1 & -1 \\
        1 & 0 & -3 & 2 & -1 & 1 & -3 \\
        1 & 0 & -3 & -1 & 2 & -3 & 1 \\
        1 & -2 & -1 & 1 & -3 & 0 & 3 \\
        1 & -2 & -1 & -3 & 1 & 3 & 0
        \end{array}\right] , 
        \left[\begin{array}{rrrrrrr}
        4 & 2 & 1 & 1 & 1 & 1 & 1 \\
        2 & -3 & 2 & 0 & 0 & -2 & -2 \\
        1 & 2 & -1 & -1 & -3 & 0 & -3 \\
        1 & 0 & -1 & -1 & -2 & -3 & 3 \\
        1 & 0 & -3 & -2 & 3 & -1 & -1 \\
        1 & -2 & 0 & -3 & -1 & 3 & 1 \\
        1 & -2 & -3 & 3 & -1 & 1 & 0
        \end{array}\right] , 
        \left[\begin{array}{rrrrrrr}
        3 & 3 & 2 & 1 & 1 & 1 & 0 \\
        3 & -1 & -2 & 1 & 0 & -3 & 1 \\
        2 & -2 & 1 & -2 & -2 & 2 & 2 \\
        1 & 1 & -2 & 0 & -3 & 1 & -3 \\
        1 & 0 & -2 & -3 & 3 & 1 & -1 \\
        1 & -3 & 2 & 1 & 1 & 0 & -3 \\
        0 & -1 & -2 & 3 & 1 & 3 & 1
        \end{array}\right] , $}\\[0.3cm]
        \scalebox{0.45}[0.45]{$
        \left[\begin{array}{rrrrrrr}
        3 & 3 & 2 & 1 & 1 & 1 & 0 \\
        3 & -1 & 0 & -1 & -2 & -3 & 1 \\
        2 & 0 & -3 & -2 & 0 & 2 & -2 \\
        1 & -1 & -2 & 1 & 3 & 0 & 3 \\
        1 & -2 & 0 & 3 & 1 & -1 & -3 \\
        1 & -3 & 2 & 0 & -1 & 3 & 1 \\
        0 & 1 & -2 & 3 & -3 & 1 & 1
        \end{array}\right] , 
        \left[\begin{array}{rrrrrrr}
        -1 & 1 & 3 & 1 & 0 & -3 & -2 \\
        -3 & -1 & 2 & 1 & -1 & 3 & 0 \\
        3 & 0 & 3 & 1 & 1 & 1 & 2 \\
        0 & -1 & 1 & -3 & 3 & 1 & -2 \\
        1 & -3 & 1 & -2 & -3 & -1 & 0 \\
        -1 & 3 & 1 & -3 & -1 & 0 & 2 \\
        2 & 2 & 0 & 0 & -2 & 2 & -3
        \end{array}\right] , 
        \left[\begin{array}{rrrrrrr}
        3 & 2 & 2 & 2 & 2 & 0 & 0 \\
        2 & 2 & -1 & -2 & -2 & 2 & 2 \\
        2 & -1 & 2 & -2 & -2 & -2 & -2 \\
        2 & -2 & -2 & 2 & -1 & 2 & -2 \\
        2 & -2 & -2 & -1 & 2 & -2 & 2 \\
        0 & 2 & -2 & 2 & -2 & -3 & 0 \\
        0 & 2 & -2 & -2 & 2 & 0 & -3
        \end{array}\right]$}\\[0.5cm]
\end{itemize}
Some of these matrices have nice structure, and we have reorganized some of them to show that structure. We did not fully analyze this. 
Below is the table giving all $44$ matrices. The notation in the left column, e.g. $A\oplus B\oplus C$ stands for taking all possible sums $A_i\oplus B_j\oplus C_k$. 
We write $mA$ for $A\oplus A\cdots\oplus A$ ($m$ times).
In the right column we write the multiplicity of this type in the list, due to different choices of indices. For example the entry $3A\oplus C$ consists of $2$ types: $3A\oplus C_1$ and $3A\oplus C_2$.
All multiplicities sum up to $44$, and it has been verified by the code-invariant that all of these are TH-inequivalent.
\begin{center}
    \begin{figure}[h]
        \caption{The full TH classification of $IW(7,25)$}
\begin{tabular}{|c|c|}
    \hline
    Type & Multiplicity\\
    \hline 
    $7A$ & 1\\
    \hline
    $5A\oplus B$ & 1\\
    \hline
    $3A\oplus 2B$ & 1\\
    \hline
    $A\oplus 3B$ & 1\\
    \hline
    $3A\oplus C$ & 2\\
    \hline
    $A\oplus B\oplus C$ & 2\\
    \hline
    $2A\oplus D$ &2\\ 
    \hline
    $B\oplus D$ & 2\\
    \hline
    $A\oplus E$ & 13\\
    \hline
    $F$ & 19\\
    \hline
\end{tabular}
\end{figure}
\end{center}

\bibliographystyle{alpha} 
\bibliography{weighing}

\newcommand{\etalchar}[1]{$^{#1}$}
\begin{thebibliography}{BGH{\etalchar{+}}16}

\bibitem[AALS08]{ANG20082802}
Miin~Huey Ang, K.T. Arasu, Siu {Lun Ma}, and Yoseph Strassler.
\newblock Study of proper circulant weighing matrices with weight 9.
\newblock {\em Discrete Mathematics}, 308(13):2802--2809, 2008.
\newblock Combinatorial Designs: A tribute to Jennifer Seberry on her 60th
  Birthday.

\bibitem[BGH{\etalchar{+}}16]{bright2016mathcheck}
Curtis Bright, Vijay Ganesh, Albert Heinle, Ilias Kotsireas, Saeed Nejati, and
  Krzysztof Czarnecki.
\newblock Mathcheck 2: A sat+ cas verifier for combinatorial conjectures.
\newblock In {\em Computer Algebra in Scientific Computing: 18th International
  Workshop, CASC 2016, Bucharest, Romania, September 19-23, 2016, Proceedings
  18}, pages 117--133. Springer, 2016.

\bibitem[CD06]{colbourn2006handbook}
Charles~J Colbourn and Jeffrey~H Dinitz.
\newblock {\em Handbook of Combinatorial Designs}.
\newblock CRC Press, 2006.

\bibitem[CERv21]{Dean:AAECC:2021}
Dean Crnkovi\'{c}, Ronan Egan, B.~G. Rodrigues, and Andrea \v{S}vob.
\newblock L{CD} codes from weighing matrices.
\newblock {\em Appl. Algebra Engrg. Comm. Comput.}, 32(2):175--189, 2021.

\bibitem[CEv19]{Dean:FFA:2019}
Dean Crnkovi\'{c}, Ronan Egan, and Andrea \v{S}vob.
\newblock Constructing self-orthogonal and {H}ermitian self-orthogonal codes
  via weighing matrices and orbit matrices.
\newblock {\em Finite Fields Appl.}, 55:64--77, 2019.

\bibitem[Cra91a]{Craigen:BICA:1991}
R.~Craigen.
\newblock A new class of weighing matrices with square weights.
\newblock {\em Bull. Inst. Combin. Appl.}, 3:33--42, 1991.

\bibitem[Cra91b]{MR1137832}
R.~Craigen.
\newblock Weighing matrices from generalized {H}adamard matrices by
  {$2$}-adjugation.
\newblock {\em J. Combin. Math. Combin. Comput.}, 10:193--200, 1991.

\bibitem[Cra95a]{Craigen:JCD:1995:Weaving}
R.~Craigen.
\newblock Constructing weighing matrices by the method of weaving.
\newblock {\em J. Combin. Des.}, 3(1):1--13, 1995.

\bibitem[Cra95b]{Craigen:DCC:1995}
R.~Craigen.
\newblock The structure of weighing matrices having large weights.
\newblock {\em Des. Codes Cryptogr.}, 5(3):199--216, 1995.

\bibitem[DSJ{\etalchar{+}}20]{sagemath}
The~Sage Developers, William Stein, David Joyner, David Kohel, John Cremona,
  and Burçin Eröcal.
\newblock Sagemath, version 9.0, 2020.

\bibitem[FS09]{Flammia:Severini:2009}
Steven~T. Flammia and Simone Severini.
\newblock Weighing matrices and optical quantum computing.
\newblock {\em J. Phys. A}, 42(6):065302, 16, 2009.

\bibitem[Gut09]{gutman2009circulant}
Alex~James Gutman.
\newblock Circulant weighing matrices.
\newblock {\em Discrete Mathematics}, 309(1):1--18, 2009.

\bibitem[KPS22]{Hadi:Thomas:Sho:JCTA:2022}
Hadi Kharaghani, Thomas Pender, and Sho Suda.
\newblock Balanced weighing matrices.
\newblock {\em J. Combin. Theory Ser. A}, 186:Paper No. 105552, 18, 2022.

\bibitem[KSTR22]{Hadi:Sho:BTR:JACO:2022}
Hadi Kharaghani, Sho Suda, and Behruz Tayfeh-Rezaie.
\newblock Disjoint weighing matrices.
\newblock {\em J. Algebraic Combin.}, 55(1):27--41, 2022.

\bibitem[LS18]{KHL:BS:JCTA:2018}
Ka~Hin Leung and Bernhard Schmidt.
\newblock Structure of group invariant weighing matrices of small weight.
\newblock {\em J. Combin. Theory Ser. A}, 154:114--128, 2018.

\bibitem[Mun17]{munemasa2017weighing}
A.~Munemasa.
\newblock Weighing matrices of some other orders and weights.
\newblock URL
  http://www.math.is.tohoku.ac.jp/~munemasa/research/matrices/wo.htm, 2017.

\bibitem[Par11]{parker2011multilevel}
Keli~Siqueiros Parker.
\newblock Multilevel hadamard matrices.
\newblock {\em CORE}, 2011.

\bibitem[Rie06]{riel2006nsoks}
Joe Riel.
\newblock nsoks: A maple script for writing n as a sum of k squares.
\newblock {\em URL https://www. swmath. org/software/21060}, 2006.

\bibitem[SJA05]{Seberry2005}
Jennifer Seberry, Beata JWysocki, and Tadeusz AWysocki.
\newblock On some applications of hadamard matrices.
\newblock {\em Metrika}, 62(2-3):221--239, November 2005.

\bibitem[Tan18]{M_M_Tan:DCC:2018}
Ming~Ming Tan.
\newblock Group invariant weighing matrices.
\newblock {\em Des. Codes Cryptogr.}, 86(12):2677--2702, 2018.

\bibitem[vD02]{vanDam:2002}
Wim van Dam.
\newblock Quantum algorithms for weighing matrices and quadratic residues.
\newblock volume~34, pages 413--428. 2002.
\newblock Quantum computation and quantum cryptography.

\bibitem[Wil44]{Williamson1944}
John Williamson.
\newblock Hadamard's determinant theorem and the sum of four squares.
\newblock {\em Duke Mathematical Journal}, 11(1), March 1944.

\end{thebibliography}

 \end{document}